\documentclass[11pt,reqno]{amsart}
\usepackage{amssymb,latexsym}
\usepackage{graphicx}
\usepackage{color} 
\usepackage{url}		
\calclayout

\theoremstyle{plain}
\numberwithin{equation}{section}
\newtheorem{thm}{Theorem}[section]
\newtheorem{theorem}[thm]{Theorem}
\newtheorem{lemma}[thm]{Lemma}

\newtheorem{proposition}[thm]{Proposition}
\newtheorem{corollary}[thm]{Corollary}
\newtheorem{remark}[thm]{Remark}

\begin{document}
\setcounter{page}{1}

\title[Combinatorial sums derived from properties of Legendre polynomials]
{Combinatorial sums derived from properties of Legendre polynomials}
\author{Michel Bataille}
\address{Independent Researcher, 76520 Franqueville-Saint-Pierre, France}
\email{michelbataille@wanadoo.fr}
\medskip
\author{Robert Frontczak}
\address{Independent Researcher, 72764 Reutlingen, Germany}
\email{robert.frontczak@web.de}

\begin{abstract}
From an identity connecting a combinatorial sum and Legendre polynomials, we derive closed forms for a number of combinatorial sums. Some of them are obtained \textit{via} results about the integrals of functions associated with Legendre polynomials. \\
\\
\bigskip
{\sc Key words}: Sum, (central) binomial coefficient, Legendre polynomial, Gamma function. 
\\
\bigskip
{\sc MSC 2020}: 05A10, 11B65, 11B83.  
\end{abstract}


\maketitle

\section{Introduction}

As an introduction and for the convenience of the reader, we start by briefly reviewing some basic properties of the Legendre polynomials.
This important class of polynomials is named after A.-M. Legendre, who discovered them around 1780. \\
First, recall that for a complex number $x$ these polynomials $P_n(x),\ (n=0,1,\ldots)$ are defined by
\[P_n(x) = \frac{1}{n! 2^n} \frac{d^n}{dx^n} \left((x^2-1)^n\right),\]
which is called Rodrigues’ formula for $P_n(x)$. Applying Leibniz's formula to the product $(x+1)^n (x-1)^n$, we get their equivalent form
\begin{equation}\label{intro_a} 
P_n(x) = \frac{1}{2^n} \sum_{k=0}^n \binom{n}{k}^2 (x+1)^{n-k} (x-1)^k.
\end{equation}
Thus, $P_0(x)=1,\ P_1(x)=x,\ P_2(x)=\frac{3x^2-1}{2},\ldots$.\\
The following important formula (generating series) is well-known:
\begin{equation}\label{intro_b}
\frac{1}{(z^2-2xz+1)^{1/2}} = \sum_{n=0}^{\infty} P_n(x) z^n.
\end{equation}
We immediately deduce that for all $n\ge 0$ we have $P_{2n+1}(0)=0$  and $P_{2n}(0)=\binom{-1/2}{2n}=(-1)^n 2^{-2n}\binom{2n}{n}$ as well as $P_n(1)=1$. Note also that $P_n(-x)=(-1)^n P_n(x)$. \\
By differentiation with respect to $z$ followed by a multiplication by $1-2xz+z^2$, we derive the recursion
\begin{equation}\label{intro_c}
(n+1)P_{n+1}(x) - (2n+1) x P_n(x) + nP_{n-1}(x) = 0 \quad (n\ge 1).
\end{equation}
We also recall the orthogonality property of the Legendre polynomials, that is, 
$$\int_{-1}^1 P_n(x)P_m(x)\,dx = 0\quad\text{for}\,\, n\neq m.$$ 
In particular with $m=0$, we obtain that 
$$\int_{-1}^1 P_n(x)\,dx = 0 \quad\text{for}\,\, n\ge 1.$$ \\
Although classical, Legendre polynomials are still the subject of mathematical research. Papers covering different aspects of the topic 
include Klemm and Larsen \cite{Klemm}, Wan and Zudilin \cite{Wan}, Diekemaa and Koornwinder \cite{Diekemaa}, Guo \cite{Guo}, 
Chu and Campbell \cite{Chu}, and Aloui \cite{Aloui}, to mention a few.
The On-Line Encyclopedia of Integer Sequences (OEIS) \cite{OEIS} contains some number sequences associated with Legendre polynomials:
A001801, A008316, A110129 or A330203. \\
In this article, we first use a coefficient extraction formula to link a class of combinatorial sums to Legendre polynomials.
From this connection we deduce a range of combinatorial sums via integrals of certain functions associated with Legendre polynomials.

\section{A first result}

Let $c_n, n\geq 0,$ be a real or complex sequence of numbers and let $F(z)$ be its ordinary generating function, i.e.,
$$F(z)=\sum_{k=0}^\infty c_k\,z^k.$$
Let $[z^n]F(z)$ denote the coefficient in $F(z)$ belonging to $z^n$. We state the following coefficient extraction identity 
as a lemma:
\begin{lemma}\label{main_lem}
For all $n\geq 1$ we have
\begin{equation}
\sum_{k=0}^n (-1)^{n-k} \frac{2n}{n+k} \binom{n+k}{2k} c_k = [z^n] \frac{1-z}{1+z}F\left (\frac{z}{(1+z)^2}\right ).
\end{equation}
\end{lemma}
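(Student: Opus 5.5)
By linearity in the sequence $(c_k)$, it suffices to prove the identity when $F(z)=z^k$ for a fixed $k$. The left side then reduces to the single term $(-1)^{n-k}\frac{2n}{n+k}\binom{n+k}{2k}$, with the convention that it vanishes for $k>n$. So the lemma is equivalent to showing that for every $k\ge 0$ and $n\ge 1$,
\[
[z^n]\,\frac{1-z}{1+z}\cdot\frac{z^k}{(1+z)^{2k}} = (-1)^{n-k}\frac{2n}{n+k}\binom{n+k}{2k}.
\]
This is a formal power series statement, so no convergence issue arises: $z/(1+z)^2$ has zero constant term, and $F(z/(1+z)^2)$ is a well-defined formal series whose coefficient of $z^n$ involves only $c_0,\dots,c_n$.

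For $k\ge 1$ I extract the coefficient directly. Write $\frac{1-z}{1+z}=\frac{2}{1+z}-1$, so that
\[
\frac{1-z}{1+z}\cdot\frac{z^k}{(1+z)^{2k}} = \frac{2z^k}{(1+z)^{2k+1}} - \frac{z^k}{(1+z)^{2k}}.
\]
Using $[z^m](1+z)^{-r}=(-1)^m\binom{r+m-1}{m}$ with $m=n-k$, the coefficient of $z^n$ equals
\[
(-1)^{n-k}\left[2\binom{n+k}{2k}-\binom{n+k-1}{2k-1}\right].
\]
Since $\binom{n+k-1}{2k-1}=\frac{2k}{n+k}\binom{n+k}{2k}$, the bracket simplifies to $\binom{n+k}{2k}\bigl(2-\frac{2k}{n+k}\bigr)=\frac{2n}{n+k}\binom{n+k}{2k}$, which is what we want. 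When $k>n$ both binomial coefficients vanish, which is consistent with the left side having $z^k$ as a factor.

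The case $k=0$ has to be checked separately, and this is the only delicate point. Here the series is $\frac{1-z}{1+z}=1+2\sum_{m\ge1}(-1)^m z^m$, whose coefficient of $z^n$ for $n\ge1$ is $2(-1)^n$. This agrees with $(-1)^n\frac{2n}{n}\binom{n}{0}=2(-1)^n$. The same computation shows why the hypothesis $n\ge 1$ is needed: at $n=0$ the constant term is $1$, whereas $\frac{2n}{n+k}$ is undefined for $n=k=0$. The only real care in the whole argument is in this boundary bookkeeping and in the binomial simplification; everything else is routine coefficient extraction.
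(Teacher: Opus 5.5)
Your proof is correct and takes essentially the same route as the paper: expand with the negative binomial series $[z^m](1+z)^{-r}=(-1)^m\binom{r+m-1}{m}$ and simplify the two resulting binomial coefficients. The paper splits $\frac{1-z}{1+z}=\frac{1}{1+z}-\frac{z}{1+z}$ to get $\binom{n+k}{2k}+\binom{n+k-1}{2k}$ where you split it as $\frac{2}{1+z}-1$; your absorption-identity step plus the separate $k=0$ check handles the boundaries more cleanly than the paper's intermediate factor $\frac{2n}{n-k}$ with $(n-k-1)!$, which is undefined at $k=n$.
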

\begin{proof}
From
\[\frac{1-z}{1+z}F\left (\frac{z}{(1+z)^2}\right )=\sum_{k=0}^{\infty}\frac{c_k z^k}{(1+z)^{2k+1}}-\sum_{k=0}^{\infty}\frac{c_k z^{k+1}}{(1+z)^{2k+1}}\]
and 
\[\frac{1}{(1+z)^{2k+1}} = \sum_{m=0}^{\infty} (-1)^m \binom{2k+m}{2k} z^m,\]
we deduce that
\begin{align*}
[z^n] \frac{1-z}{1+z}F\left (\frac{z}{(1+z)^2}\right ) &= \sum_{k=0}^n c_k(-1)^{n-k} \left(\binom{2k+n-k}{2k}+\binom{2k+n-k-1}{2k} \right) \\
&= \sum_{k=0}^n c_k (-1)^{n-k} \frac{(n+k-1)!}{(2k)!(n-k-1)!}\cdot\frac{2n}{n-k} \\
&= \sum_{k=0}^n (-1)^{n-k} \frac{2n}{n+k} \binom{n+k}{2k} c_k. 
\end{align*}
\end{proof}

Now, our first main result follows.

\begin{theorem}\label{main_thm}
Let $x\in\mathbb{C}$. Then for all $n\geq 1$ we have
\begin{equation}
\sum_{k=0}^n \binom{n+k}{2k} \binom{2k}{k} \frac{2^{-2k}}{n+k} x^k 
= \frac{(-1)^n}{2n} \left ( P_n\left (-\frac{x+2}{2}\right ) - P_{n-1}\left (-\frac{x+2}{2}\right )\right ),
\end{equation}
where $P_n(x)$ is the $n$th Legendre polynomial.
\end{theorem}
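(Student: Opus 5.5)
Apply Lemma~\ref{main_lem} with $c_k=\binom{2k}{k}2^{-2k}x^k(-1)^k$. With this choice the generating function has a closed form:
\[
F(z)=\sum_{k\ge0}\binom{2k}{k}\left(-\tfrac{xz}{4}\right)^k=(1+xz)^{-1/2}.
\]
The sign $(-1)^{n-k}$ in the lemma combines with $(-1)^k$ to give $(-1)^n$. Multiplying the lemma by $(-1)^n/(2n)$, the left-hand side of the theorem therefore equals
\[
\frac{(-1)^n}{2n}\,[z^n]\,\frac{1-z}{1+z}\,F\!\left(\frac{z}{(1+z)^2}\right).
\]

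Next compute the substituted function:
\[
F\!\left(\frac{z}{(1+z)^2}\right)=\frac{1+z}{\bigl((1+z)^2+xz\bigr)^{1/2}}
=\frac{1+z}{\bigl(1+(2+x)z+z^2\bigr)^{1/2}}.
\]
The square-root branch is the one equal to $1$ at $z=0$, consistent with $F(0)=1$. Hence
\[
\frac{1-z}{1+z}\,F\!\left(\frac{z}{(1+z)^2}\right)=\frac{1-z}{\bigl(z^2-2yz+1\bigr)^{1/2}},\qquad y=-\frac{x+2}{2}.
\]

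By the generating series \eqref{intro_b}, $(z^2-2yz+1)^{-1/2}=\sum_m P_m(y)z^m$. Multiplying by $1-z$ and extracting $[z^n]$ gives $P_n(y)-P_{n-1}(y)$ for $n\ge1$. This is exactly the right-hand side after the factor $(-1)^n/(2n)$.

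The only delicate point is making the formal power series manipulations legitimate, since $x$ is an arbitrary complex number. I would treat all series as formal power series in $z$: $F$ is a formal series, and $z/(1+z)^2$ has zero constant term, so the composition is well defined. Alternatively, both sides are polynomials in $x$, so it suffices to verify the identity for small $|x|$ and $|z|$, where everything converges. Either way the argument is routine; the main work is choosing $c_k$ so that $F$ has a closed form.
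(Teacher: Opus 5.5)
Your proposal is correct and follows the paper's proof essentially verbatim: the same choice $c_k=\binom{-1/2}{k}x^k$ in Lemma~\ref{main_lem}, the same simplification of the composed series to $(1-z)\bigl(z^2+(2+x)z+1\bigr)^{-1/2}$, and the same appeal to \eqref{intro_b}. Your added justification, treating everything as formal power series (or using polynomiality in $x$), makes explicit a point the paper leaves implicit.
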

\begin{proof}
We apply Lemma \ref{main_lem}. Let $c_k=\binom{-1/2}{k}x^k=\binom{2k}{k}2^{-2k} (-1)^k x^k.$ Then
$$F(z) = \frac{1}{(1+xz)^{1/2}},$$
and
$$\frac{1-z}{1+z} F\left (\frac{z}{(1+z)^2}\right ) = \frac{1-z}{(z^2+(2+x)z+1)^{1/2}}.$$
From \eqref{intro_b}, we obtain
$$\frac{1}{(z^2+(2+x)z+1)^{1/2}} = \sum_{k=0}^\infty P_k\left (-\frac{x+2}{2}\right )\,z^k, $$
and the proof is complete.
\end{proof}

\begin{remark}
We note that
\[\binom{n+k}{k}\binom{n}{k}=\binom{n+k}{n}\binom{n}{n-k}=\binom{n+k}{n-k}\binom{2k}{k}=\binom{n+k}{2k}\binom{2k}{k}.\]  
\end{remark}

\begin{corollary}
For all $n\geq 1$
\begin{equation}
\sum_{k=0}^n \binom{n+k}{2k} \binom{2k}{k} \frac{2^{-2k}}{n+k} = \frac{(-1)^n}{2n} ( S_n - S_{n-1}),
\end{equation}
and
\begin{equation}
\sum_{k=0}^n \binom{n+k}{2k} \binom{2k}{k} (-1)^k \frac{2^{-2k}}{n+k} = \frac{(-1)^n}{2n} ( Q_n - Q_{n-1}),
\end{equation}
where
$$S_n=\left (-\frac{5}{4}\right )^n \sum_{k=0}^n \binom{n}{k}^2 5^{-k}$$
and
$$Q_n=\left (-\frac{3}{4}\right )^n \sum_{k=0}^n \binom{n}{k}^2 (-1)^k 3^{-k}.$$
\end{corollary}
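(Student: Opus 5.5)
The plan is to specialize Theorem~\ref{main_thm} at $x=1$ and $x=-1$, and then evaluate the resulting Legendre values with the explicit formula \eqref{intro_a}. The whole task is to show that
\[
P_n\left(-\tfrac{3}{2}\right)=S_n \qquad\text{and}\qquad P_n\left(-\tfrac{1}{2}\right)=Q_n \qquad (n\ge 0).
\]
Once these hold, the two displayed identities follow at once. Indeed, for $x=1$ the left side of Theorem~\ref{main_thm} is the first sum and the argument is $-\frac{x+2}{2}=-\frac32$. For $x=-1$ the factor $x^k$ becomes $(-1)^k$ and the argument is $-\frac12$.

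First take $x=1$. By the parity relation $P_n(-t)=(-1)^nP_n(t)$ it suffices to compute $P_n(3/2)$. In \eqref{intro_a}, with $t=3/2$, we have $t+1=5/2$ and $t-1=1/2$. This gives
\[
P_n\left(\tfrac32\right)=2^{-n}\sum_{k=0}^n\binom{n}{k}^2\left(\tfrac52\right)^{n-k}\left(\tfrac12\right)^{k}
=\left(\tfrac54\right)^n\sum_{k=0}^n\binom{n}{k}^2 5^{-k}.
\]
Multiplying by $(-1)^n$ yields $P_n(-3/2)=S_n$. The same identity with $n-1$ in place of $n$ gives $P_{n-1}(-3/2)=S_{n-1}$.

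Next take $x=-1$. Again $P_n(-1/2)=(-1)^nP_n(1/2)$. In \eqref{intro_a}, with $t=1/2$, we have $t+1=3/2$ and $t-1=-1/2$. Hence
\[
P_n\left(\tfrac12\right)=2^{-n}\sum_{k=0}^n\binom{n}{k}^2\left(\tfrac32\right)^{n-k}\left(-\tfrac12\right)^{k}
=\left(\tfrac34\right)^n\sum_{k=0}^n\binom{n}{k}^2(-1)^k3^{-k},
\]
so $P_n(-1/2)=(-3/4)^n\sum_k\binom nk^2(-1)^k3^{-k}=Q_n$, and likewise $P_{n-1}(-1/2)=Q_{n-1}$.

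There is no real obstacle here. The only point needing care is the bookkeeping of signs: the factor $(-1)^n$ from the parity relation must be absorbed into $(-5/4)^n$ and $(-3/4)^n$, and it is kept separate from the prefactor $(-1)^n/(2n)$ already present in Theorem~\ref{main_thm}.
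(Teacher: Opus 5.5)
Your proposal is correct and matches the paper's proof: you specialize Theorem~\ref{main_thm} at $x=1$ and $x=-1$, then use \eqref{intro_a} to identify $S_n=P_n(-3/2)$ and $Q_n=P_n(-1/2)$. Using the parity relation to absorb the sign into $(-5/4)^n$ and $(-3/4)^n$ is a clean way to write out the evaluation that the paper leaves implicit.
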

\begin{proof}
From \eqref{intro_a}, $S_n$ and $Q_n$ are the values for $P_n(-3/2)$ and $P_n(-1/2)$, respectively.
\end{proof}

\begin{corollary}
For $n\geq 1$
\begin{equation}\label{xxx_id}
\sum_{k=0}^{n} \binom{n+k}{2k} \binom{2k}{k} \frac{(-1)^k}{n+k} = 0.
\end{equation}
\end{corollary}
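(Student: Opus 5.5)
Specialize Theorem~\ref{main_thm} at $x=-4$, so that the factor $2^{-2k}x^k$ becomes $(-1)^k$. Then $-\frac{x+2}{2}=1$, and the right-hand side of the theorem is
\[
\frac{(-1)^n}{2n}\bigl(P_n(1)-P_{n-1}(1)\bigr).
\]
We recalled in the introduction, as a consequence of the generating function \eqref{intro_b}, that $P_m(1)=1$ for all $m\ge 0$. Hence $P_n(1)-P_{n-1}(1)=1-1=0$ for every $n\ge 1$.

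The left-hand side of Theorem~\ref{main_thm} at $x=-4$ is exactly
\[
\sum_{k=0}^{n} \binom{n+k}{2k}\binom{2k}{k}\frac{(-1)^k}{n+k},
\]
which is therefore $0$, giving \eqref{xxx_id}.

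The only point to check is that the theorem holds for every complex $x$ and every $n\ge1$. Indeed, both sides are polynomial identities in $x$: the theorem's proof uses formal coefficient extraction via Lemma~\ref{main_lem}. So the specialization $x=-4$ is legitimate, even though the power series $(1+xz)^{-1/2}$ has a small radius of convergence there. Nothing here is a genuine obstacle.

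As a sanity check, for $n=1$ the sum is $1-\frac{2}{2}=0$.
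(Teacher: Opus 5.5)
Your proposal is correct and follows the paper's own proof: specializing Theorem~\ref{main_thm} at $x=-4$ turns $2^{-2k}x^k$ into $(-1)^k$ and the Legendre argument into $1$, so the right-hand side vanishes because $P_n(1)=P_{n-1}(1)=1$. Your extra remark on the legitimacy of the specialization does no harm, though it is unnecessary, since the theorem is already stated for all $x\in\mathbb{C}$.
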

\begin{proof}
Use Theorem \ref{main_thm} and the fact that $P_{n}(1) = 1$ for all $n\geq 0$.
\end{proof}

\begin{corollary}
For $n\geq 1$
\begin{equation}
\sum_{k=0}^{2n} \binom{2n+k}{2k} \binom{2k}{k} \frac{(-1)^k 2^{-k}}{2n+k} = \frac{(-1)^n}{4n} 2^{-2n} \binom{2n}{n},
\end{equation}
and $n\geq 0$
\begin{equation}
\sum_{k=0}^{2n+1} \binom{2n+1+k}{2k} \binom{2k}{k}  \frac{(-1)^k 2^{-k}}{2n+1+k} = \frac{(-1)^n}{2(2n+1)} 2^{-2n} \binom{2n}{n}.
\end{equation}
\end{corollary}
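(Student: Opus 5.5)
Both identities come from Theorem \ref{main_thm} with $x=-2$. Then $-\frac{x+2}{2}=0$, and the summand on the left becomes
$$\binom{n+k}{2k}\binom{2k}{k}\frac{2^{-2k}(-2)^k}{n+k}=\binom{n+k}{2k}\binom{2k}{k}\frac{(-1)^k2^{-k}}{n+k}.$$
This is exactly the summand in both displayed sums, with $n$ replaced by $2n$ and by $2n+1$ respectively. So for every $m\ge1$,
$$\sum_{k=0}^m \binom{m+k}{2k}\binom{2k}{k}\frac{(-1)^k2^{-k}}{m+k}=\frac{(-1)^m}{2m}\bigl(P_m(0)-P_{m-1}(0)\bigr).$$
The rest is substituting the values of $P_j(0)$ recorded in the introduction: $P_{2j+1}(0)=0$ and $P_{2j}(0)=(-1)^j2^{-2j}\binom{2j}{j}$.

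For $m=2n$ with $n\ge1$, we have $P_{2n-1}(0)=0$. The right side is therefore $\frac{1}{4n}P_{2n}(0)=\frac{(-1)^n}{4n}2^{-2n}\binom{2n}{n}$, which is the first identity.

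For $m=2n+1$, the condition $m\ge1$ holds for all $n\ge0$, so the case $n=0$ is allowed. Here $P_{2n+1}(0)=0$, and the sign $(-1)^{2n+1}=-1$ combines with $-P_{2n}(0)$ to give
$$\frac{1}{2(2n+1)}P_{2n}(0)=\frac{(-1)^n}{2(2n+1)}2^{-2n}\binom{2n}{n},$$
which is the second identity.

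No step is a real obstacle. The only points needing care are the sign bookkeeping in the odd case and checking that $n=0$ is covered there, which holds because $m=1$ satisfies the hypothesis $m\ge1$ of Theorem \ref{main_thm}.
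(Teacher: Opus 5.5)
Your proposal is correct and follows the paper's proof: set $x=-2$ in Theorem \ref{main_thm} so the Legendre arguments become $0$, then substitute $P_{2j+1}(0)=0$ and $P_{2j}(0)=(-1)^j2^{-2j}\binom{2j}{j}$. Your sign bookkeeping in the odd case is right, and so is your check that $n=0$ is covered by $m=1$.
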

\begin{proof}
Use the values
$$P_{2n+1}(0) = 0 \qquad\text{and}\qquad P_{2n}(0) = (-1)^n 2^{-2n} \binom{2n}{n}.$$
\end{proof}

\section{Some combinatorial identities derived from integration}

\begin{proposition}
We have
\begin{equation}\label{id_int1}
\sum_{k=0}^{n} \binom{n+k}{2k} \binom{2k}{k} \frac{(-1)^k}{(n+k)(k+1)} = 
\begin{cases}
1/2, & n=1 \\ 
0, & n\geq 2.
\end{cases} 
\end{equation}
\end{proposition}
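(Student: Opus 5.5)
Our plan is to integrate the identity of Theorem \ref{main_thm} in $x$. Multiply by $4$ and replace $x$ by $-4t$ in Theorem \ref{main_thm}; since $2^{-2k}(-4t)^k=(-1)^k t^k$, this gives, for $n\ge1$,
\[
\sum_{k=0}^n \binom{n+k}{2k}\binom{2k}{k}\frac{(-1)^k}{n+k}\,t^k=\frac{(-1)^n}{2n}\Bigl(P_n(2t-1)-P_{n-1}(2t-1)\Bigr).
\]
Integrating over $t\in[0,1]$ turns the left side into exactly the sum in \eqref{id_int1}, because $\int_0^1 t^k\,dt=1/(k+1)$. On the right side we substitute $u=2t-1$, so $dt=du/2$ and $u$ runs over $[-1,1]$. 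The sum therefore equals
\[
\frac{(-1)^n}{4n}\left(\int_{-1}^1 P_n(u)\,du-\int_{-1}^1 P_{n-1}(u)\,du\right).
\]

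We now use the orthogonality consequence recalled in the introduction: $\int_{-1}^1 P_m(u)\,du=0$ for $m\ge1$, while $\int_{-1}^1 P_0(u)\,du=2$.

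For $n\ge2$, both $n$ and $n-1$ are at least $1$. Both integrals vanish, so the sum is $0$.

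For $n=1$, the first integral vanishes and the second equals $2$. This gives $\frac{-1}{4}(0-2)=\frac12$. As a direct check, the terms $k=0,1$ are $1$ and $-\tfrac12$, which also sum to $\tfrac12$.

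No step here is a real obstacle. The only care needed is the change of variables $x=-4t$, which must line up the sign $(-1)^k$ and the factor $2^{-2k}$ so that the Legendre argument $-(x+2)/2$ becomes $2t-1$. This substitution maps $[0,1]$ onto the orthogonality interval $[-1,1]$.
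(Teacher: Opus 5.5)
Your proof is correct and follows the paper's argument: substitute $x\mapsto -4t$ in Theorem \ref{main_thm}, integrate over $[0,1]$, and use $\int_{-1}^1 P_m(u)\,du=0$ for $m\ge 1$. Your explicit handling of $n=1$, where $\int_{-1}^1 P_0(u)\,du=2$, fills in a case the paper leaves implicit; the only slip is the phrase ``multiply by $4$'', which is spurious, since the substitution $x=-4t$ alone already gives your displayed identity.
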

\begin{proof}
Under the transformation $x\mapsto -4x$, the main identity in Theorem \ref{main_thm} becomes
\begin{equation}\label{main_id_mod}
\sum_{k=0}^n \binom{n+k}{2k} \binom{2k}{k} \frac{(-1)^k}{n+k} x^k = \frac{(-1)^n}{2n} \left ( P_n(2x-1) - P_{n-1}(2x-1)\right ).
\end{equation}
The result follows by integrating both sides from 0 to 1 and noting that for $n\ge 1$
$$\int_0^1 P_n(2x-1) dx = \frac{1}{2}\int_{-1}^1 P_n(x) dx=0.$$
\end{proof}

\begin{corollary}
We have
\begin{equation}
\sum_{k=0}^{n} \binom{n+k}{2k} \binom{2k}{k} \frac{(-1)^k}{(n+k)}\frac{k}{k+1} = 
\begin{cases}
-1/2, & n=1 \\ 
0, & n\geq 2.
\end{cases} 
\end{equation}
\end{corollary}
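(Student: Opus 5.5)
Since $\frac{k}{k+1} = 1 - \frac{1}{k+1}$, the sum in the statement splits as the difference of two sums we already know:
\[
\sum_{k=0}^{n} \binom{n+k}{2k} \binom{2k}{k} \frac{(-1)^k}{n+k}\frac{k}{k+1}
= \sum_{k=0}^{n} \binom{n+k}{2k} \binom{2k}{k} \frac{(-1)^k}{n+k}
- \sum_{k=0}^{n} \binom{n+k}{2k} \binom{2k}{k} \frac{(-1)^k}{(n+k)(k+1)} .
\]
The term $k=0$ causes no difficulty, because $n\ge 1$ means $n+k\neq 0$. So the splitting is legitimate termwise.

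For the first sum on the right we use the earlier corollary, identity \eqref{xxx_id}, which gives $0$ for every $n\ge 1$. For the second sum we use the preceding proposition, identity \eqref{id_int1}, which gives $1/2$ when $n=1$ and $0$ when $n\ge 2$. Subtracting yields $0-1/2=-1/2$ for $n=1$ and $0-0=0$ for $n\ge 2$, which is exactly the claimed case distinction.

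There is no real obstacle here. The only thing to check is that both earlier results hold for the full range $n\ge 1$ used in the statement, and they do. As a sanity check for $n=1$: the $k=0$ term vanishes because of the factor $k$, and the $k=1$ term is $\binom{2}{2}\binom{2}{1}\cdot\frac{-1}{2}\cdot\frac12=-\frac12$, which agrees with the formula.
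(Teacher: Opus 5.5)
Your proposal is correct and is exactly the paper's argument. The paper proves this corollary by combining \eqref{id_int1} with \eqref{xxx_id}, which is your splitting $\frac{k}{k+1}=1-\frac{1}{k+1}$ followed by subtracting the two known sums.
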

\begin{proof}
Combine \eqref{id_int1} with \eqref{xxx_id}.
\end{proof}

\begin{theorem}\label{main_int_thm1}
Let $\mu$ be a complex number with $\Re(\mu)> 0$. Then
\begin{equation}\label{main_int_id1}
\sum_{k=0}^{n} \binom{n+k}{2k} \binom{2k}{k} \frac{(-1)^k}{(n+k)(2k+2\mu)} 
= \frac{(-1)^n}{4n} \left (\frac{\Gamma^2(\mu)}{\Gamma(\mu+n+1)\Gamma(\mu-n)} - \frac{\Gamma^2(\mu)}{\Gamma(\mu+n)\Gamma(\mu+1-n)} \right ),
\end{equation}
where $\Gamma(z)$ denotes the famous Gamma function defined by
\begin{equation*}
\Gamma(z) = \int_0^\infty e^{-t} t^{z-1} dt \qquad (\Re(z)>0).
\end{equation*}
\end{theorem}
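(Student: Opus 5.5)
Following the proof of the Proposition above, we integrate the identity \eqref{main_id_mod} against a weight. Replace $x$ by $x^2$ in \eqref{main_id_mod}, multiply both sides by $x^{2\mu-1}$, and integrate over $[0,1]$; the condition $\Re(\mu)>0$ makes every integral converge. On the left, term by term,
\[
\int_0^1 x^{2k+2\mu-1}\,dx=\frac{1}{2k+2\mu},
\]
so the left side becomes exactly the sum in \eqref{main_int_id1}. The right side becomes
\[
\frac{(-1)^n}{2n}\bigl(I_n(\mu)-I_{n-1}(\mu)\bigr),\qquad I_m(\mu)=\int_0^1 x^{2\mu-1}P_m(2x^2-1)\,dx .
\]
Substituting $t=x^2$ gives $I_m(\mu)=\frac12\int_0^1 t^{\mu-1}P_m(2t-1)\,dt$. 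This accounts for the factor $\frac{(-1)^n}{4n}$ in the statement.

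Everything then reduces to the classical moment formula
\[
J_m(\mu):=\int_0^1 t^{\mu-1}P_m(2t-1)\,dt=\frac{\Gamma^2(\mu)}{\Gamma(\mu+m+1)\,\Gamma(\mu-m)} .
\]
Applied with $m=n$ and $m=n-1$, it gives precisely the two terms on the right of \eqref{main_int_id1}.

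To prove the moment formula, use the shifted Rodrigues form
\[
P_m(2t-1)=\frac{1}{m!}\frac{d^m}{dt^m}\bigl(t^m(t-1)^m\bigr),
\]
which follows from Rodrigues' formula by the change of variable $x=2t-1$. Integrate by parts $m$ times. The boundary terms vanish: at $t=1$ because $(t-1)^m$ still has a zero there after fewer than $m$ derivatives, and at $t=0$ because the power of $t$ in the boundary term is positive, using $\Re(\mu)>0$. This yields
\[
J_m(\mu)=\frac{(-1)^m}{m!}\,(\mu-1)(\mu-2)\cdots(\mu-m)\int_0^1 t^{\mu-1-m}\,t^m(t-1)^m\,dt .
\]
Since $(t-1)^m=(-1)^m(1-t)^m$, the remaining integral equals $(-1)^m B(\mu,m+1)=(-1)^m\,\Gamma(\mu)\,m!/\Gamma(\mu+m+1)$. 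The falling factorial is $(\mu-1)\cdots(\mu-m)=\Gamma(\mu)/\Gamma(\mu-m)$. Multiplying, the signs $(-1)^m(-1)^m$ cancel and the $m!$ cancels, giving the claimed formula. When $\mu$ is a positive integer $\le m$ both sides are zero, since $1/\Gamma$ vanishes at non-positive integers. If one prefers to avoid that edge case, one can instead expand via \eqref{intro_a} and use Chu–Vandermonde, or simply invoke the formula as classical.

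The main obstacle is this step: checking the boundary terms and signs carefully so that the result comes out exactly as $\Gamma^2(\mu)/(\Gamma(\mu+m+1)\Gamma(\mu-m))$ with no stray $(-1)^m$. As a check, $\mu=1$ gives $J_0=1$ and $J_m=0$ for $m\ge1$, which is consistent with the Proposition above.
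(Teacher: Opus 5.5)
Your proof is correct and follows the same route as the paper. Both substitute $x\mapsto x^2$ in \eqref{main_id_mod}, multiply by $x^{2\mu-1}$, integrate over $[0,1]$, and reduce to the moment formula $\int_0^1 x^{2\mu-1}P_m(2x^2-1)\,dx=\Gamma^2(\mu)/(2\Gamma(\mu+m+1)\Gamma(\mu-m))$.

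The only difference is how that formula is justified. The paper cites it from Gradshteyn and Ryzhik (7.233), noting it can also be proved by induction via the three-term recurrence. You derive it directly from the shifted Rodrigues formula, $m$ integrations by parts and the Beta integral, and your treatment of the boundary terms, the signs and the case $\mu\in\{1,\dots,m\}$ is sound.
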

\begin{proof}
We shall use the formula
$$\int_0^1 x^{2\mu -1}P_n(2x^2-1) dx = \frac{\Gamma^2(\mu)}{2 \Gamma(\mu+n+1)\Gamma(\mu-n)}, \quad \Re(\mu)>0$$
directly extracted from Gradshteyn and Ryzhik \cite[Equation (7.233)]{GrRy07} or easily proved by induction using \eqref{intro_c}.
Hence, working with \eqref{main_id_mod}, we get
\begin{align*}
&\sum_{k=0}^{n} \binom{n+k}{2k} \binom{2k}{k} \frac{(-1)^k}{(n+k)(2k+2\mu)} = \sum_{k=0}^{n} \binom{n+k}{2k} \binom{2k}{k} \frac{(-1)^k}{(n+k)} \int_0^1 x^{2k+2\mu-1} dx \\
&\qquad\qquad = \frac{(-1)^n}{2n} \left (\int_0^1 x^{2\mu -1}P_n(2x^2-1) dx - \int_0^1 x^{2\mu -1}P_{n-1}(2x^2-1) dx \right ),
\end{align*}
and the proof is finished.
\end{proof}

\begin{corollary}\label{cor_int}
For $n\geq 1$ we have
\begin{equation}\label{id_mu1}
\sum_{k=0}^{n} \binom{n+k}{2k} \binom{2k}{k} \frac{(-1)^k}{(n+k)(2k+1)} = \frac{2}{(2n-1)(2n+1)},
\end{equation}
\begin{equation}\label{id_mu2}
\sum_{k=0}^{n} \binom{n+k}{2k} \binom{2k}{k} \frac{(-1)^k}{(n+k)(2k+3)} = - \frac{2}{(2n-3)(2n-1)(2n+1)(2n+3)},
\end{equation}
\begin{equation}\label{id_mu3}
\sum_{k=0}^{n} \binom{n+k}{2k} \binom{2k}{k} \frac{(-1)^k}{(n+k)^2} = \frac{(-1)^{n+1}}{n^2 \binom{2n}{n}},
\end{equation}
and
\begin{equation}\label{id_mu4}
\sum_{k=0}^{n} \binom{n+k}{2k} \binom{2k}{k} \frac{(-1)^k}{(n+k)(n+1+k)} = \frac{(-1)^{n+1}}{(2n+1) \binom{2n}{n}}.
\end{equation}
\end{corollary}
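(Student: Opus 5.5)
All four identities come from Theorem~\ref{main_int_thm1} with a suitable choice of $\mu$, followed by simplification of the Gamma quotients. The key tool is the reflection-type identity
\[\frac{1}{\Gamma(\mu-m)} = \frac{(\mu-1)(\mu-2)\cdots(\mu-m)}{\Gamma(\mu)},\]
valid for integer $m\ge 0$. With it, each of the two terms on the right of \eqref{main_int_id1} becomes a rational function of $\mu$:
\[\frac{\Gamma^2(\mu)}{\Gamma(\mu+n+1)\Gamma(\mu-n)} = \frac{\prod_{j=1}^{n}(\mu-j)}{\prod_{j=0}^{n}(\mu+j)}.\]
The companion term $\Gamma^2(\mu)/(\Gamma(\mu+n)\Gamma(\mu+1-n))$ has the same form with $n$ replaced by $n-1$.

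For \eqref{id_mu1} I take $\mu=1/2$. The left side of \eqref{main_int_id1} is then the sum with $1/(2k+1)$. On the right, the product $\prod_{j=1}^{n}(1/2-j)/\prod_{j=0}^{n}(1/2+j)$ telescopes after writing $1/2-j=-(2j-1)/2$ and $1/2+j=(2j+1)/2$. This gives $(-1)^n(1)(3)\cdots(2n-1)/\bigl((1)(3)\cdots(2n+1)\bigr) = (-1)^n/(2n+1)$. The $(n-1)$ term gives $(-1)^{n-1}/(2n-1)$. Their difference, multiplied by $(-1)^n/(4n)$, is
\[\frac{1}{4n}\Bigl(\frac{1}{2n+1}+\frac{1}{2n-1}\Bigr)=\frac{1}{(2n-1)(2n+1)}.\]
This is half the stated value. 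The factor $2$ is recovered because the sum has $1/(2k+1) = 2\cdot 1/(2k+2\mu)$ at $\mu=1/2$.

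For \eqref{id_mu2} I take $\mu=3/2$, which gives $1/(2k+3)=2\cdot 1/(2k+2\mu)$. The same telescoping applies, now with two uncancelled odd factors at each end. The ratio is $(-1)^{n-1}\cdot\frac{1\cdot 1}{(2n-1)(2n+1)}\cdot 3$ up to bookkeeping, and I will track this carefully. Combining the $n$ and $n-1$ versions over the common denominator $(2n-3)(2n-1)(2n+1)(2n+3)$ should give the stated value.

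For \eqref{id_mu4} I take $\mu=(n+1)/2$, so that $2k+2\mu=n+1+k$. For \eqref{id_mu3} I take $\mu=n/2$, so that $2k+2\mu=n+k$. Here $\mu$ depends on $n$, which is legitimate because \eqref{main_int_id1} holds for each fixed $n$ and every $\Re\mu>0$.

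In the rational form, one of the terms now vanishes or simplifies drastically. For $\mu=n/2$ with $n$ even, the factor $\mu-j$ at $j=n/2$ kills the numerator product $\prod_{j=1}^{n}(\mu-j)$. When $n$ is odd there is no zero, and one must instead evaluate products of half-integers, which produce central binomial coefficients. This case analysis, including the parity split and the $\binom{2n}{n}$ emerging from products such as $\prod(n/2+j)$, is the step I expect to be the main obstacle. It may be cleaner to work with the Gamma form and use the duplication formula $\Gamma(z)\Gamma(z+1/2)=2^{1-2z}\sqrt{\pi}\,\Gamma(2z)$, which should produce $\binom{2n}{n}$ uniformly in $n$.

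As a fallback for \eqref{id_mu3} and \eqref{id_mu4}, I would use a different route: integrate \eqref{main_id_mod} against $x^{n-1}$ or $x^{n}$ over $[0,1]$, and evaluate $\int_0^1 x^m P_n(2x-1)\,dx$ by Rodrigues' formula and repeated integration by parts. This gives Beta-type values $m!^2/\bigl((m-n)!\,(m+n+1)!\bigr)$, from which the stated closed forms follow directly.
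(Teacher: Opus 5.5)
You are specializing Theorem~\ref{main_int_thm1} and rewriting the Gamma quotients as $\prod_{j=1}^{n}(\mu-j)/\prod_{j=0}^{n}(\mu+j)$, which is also what the paper does. However, your choices of $\mu$ for \eqref{id_mu3} and \eqref{id_mu4} are wrong. The denominator in Theorem~\ref{main_int_thm1} is $2k+2\mu$. So $\mu=n/2$ gives $2k+n$, and $\mu=(n+1)/2$ gives $2k+n+1$, not $n+k$ and $n+1+k$. The theorem then evaluates different sums. For instance, at $n=1$ your $\mu=n/2$ is $\mu=1/2$, which returns the value $2/3$ of \eqref{id_mu1} instead of the target $1/2$.

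The correct choices are $\mu=n$ and $\mu=n+1$. Then $2k+2\mu=2(n+k)$ and $2(n+1+k)$, so the target sums are \emph{twice} the theorem's left side. With these choices the parity split you expected never arises:
\begin{itemize}
\item At $\mu=n$, the factor $\mu-n$ kills $\prod_{j=1}^{n}(\mu-j)$ for every $n$. The companion term is $(n-1)!^2/(2n-1)!$, giving $2\cdot\frac{(-1)^{n}}{4n}\bigl(0-\frac{(n-1)!^2}{(2n-1)!}\bigr)=\frac{(-1)^{n+1}}{n^2\binom{2n}{n}}$.
\item At $\mu=n+1$, the two terms are $n!^2/(2n+1)!$ and $n!^2/(2n)!$, and \eqref{id_mu4} follows at once.
\end{itemize}
Your fallback (integrating \eqref{main_id_mod} against $x^{n-1}$ and $x^{n}$) is correct. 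It is in fact exactly this computation, because $\int_0^1 x^{2\mu-1}P_n(2x^2-1)\,dx=\frac12\int_0^1 t^{\mu-1}P_n(2t-1)\,dt$. For $m=n-1$ the $P_n$-integral must be read as $0$ by orthogonality, since your Beta expression would involve $(-1)!$.

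Your factor-$2$ bookkeeping for $\mu=1/2$ and $\mu=3/2$ also contains two errors that happen to cancel. At $\mu=1/2$ one has $2k+2\mu=2k+1$ exactly, so there is no factor $2$ to ``recover''. The $2$ was actually lost in your telescoping. The numerator $\prod_{j=1}^{n}(1/2-j)$ carries $n$ factors $\tfrac12$ and the denominator carries $n+1$. So the quotient is $2(-1)^n/(2n+1)$, not $(-1)^n/(2n+1)$; check $n=1$: $(-1/2)/(3/4)=-2/3$. The two slips cancel in \eqref{id_mu1}.

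For $\mu=3/2$, the correct quotient is $\frac{2(-1)^{n-1}}{(2n-1)(2n+1)(2n+3)}$, not the expression with a factor $3$ that you sketched. Combining it with its $n-1$ version and the prefactor $(-1)^n/(4n)$ already gives exactly $-\frac{2}{(2n-3)(2n-1)(2n+1)(2n+3)}$. Applying your extra factor $2$ would therefore produce twice the stated value.
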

\begin{proof}
These identities are special cases of Theorem \ref{main_int_thm1} for $\mu=1/2$, $\mu=3/2$, $\mu=n$, and $\mu=n+1$, respectively.
For instance, using the classical properties of the Gamma function (see \cite{Srivastava})
$$\Gamma(1/2)=\sqrt{\pi}, \quad \Gamma(1+z)=z\Gamma(z),\quad \text{and}\quad \Gamma(z)\Gamma(1-z)=\frac{\pi}{\sin(\pi z)},$$
we calculate
\begin{align*}
\sum_{k=0}^{n} \binom{n+k}{2k} \binom{2k}{k} \frac{(-1)^k}{(n+k)(2k+1)} &= \frac{\pi}{\Gamma(1/2+n)\Gamma(1/2-n)}\frac{(-1)^n}{2n}\left (\frac{1}{2n+1}+\frac{1}{2n-1}\right ) \\
&= \frac{1}{2n}\left (\frac{1}{2n+1}+\frac{1}{2n-1}\right ).
\end{align*}
The case $\mu=3/2$ is similar, noting that $\Gamma(3/2)=\sqrt{\pi}/2$. Setting $\mu=n$ in Theorem \ref{main_int_thm1} produces
\begin{align*}
\sum_{k=0}^{n} \binom{n+k}{2k} \binom{2k}{k} \frac{(-1)^k}{(n+k)^2} 
&= \frac{(-1)^n}{2n} \left (\frac{\Gamma^2(n)}{\Gamma(2n+1)\Gamma(0)} - \frac{\Gamma^2(n)}{\Gamma(2n)\Gamma(1)} \right ) \\
&= \frac{(-1)^{n+1}}{2n} \frac{(n-1)!^2}{(2n-1)!}.
\end{align*}
It is worth remarking that in the last step we used the fact that since $\Gamma(z)$ has simple poles at $z=0,-1,-2,\ldots$, 
its reciprocal $1/\Gamma(0)$ is a simple zero and the first term vanishes. Finally, with $\mu=n+1$ in Theorem \ref{main_int_thm1} we get 
\eqref{id_mu4} after some simple manipulations.
\end{proof}

\begin{corollary}
For $n\geq 1$ we have
\begin{equation}
\sum_{k=0}^{n} \binom{n+k}{2k} \binom{2k}{k} \frac{(-1)^{k+1}}{(n+k)}\frac{k}{2k+1} = \frac{1}{(2n-1)(2n+1)},
\end{equation}
and
\begin{equation}
\sum_{k=0}^{n} \binom{n+k}{2k} \binom{2k}{k} \frac{(-1)^{k}}{(n+k)}\frac{k+1}{2k+3} = \frac{1}{(2n-3)(2n-1)(2n+1)(2n+3)}.
\end{equation}
\end{corollary}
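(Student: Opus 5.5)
Both identities follow by forming suitable linear combinations of \eqref{id_mu1}, \eqref{id_mu2} and the vanishing sum \eqref{xxx_id}. We rewrite each weight $k/(2k+1)$ and $(k+1)/(2k+3)$ as a constant plus a multiple of $1/(2k+1)$ or $1/(2k+3)$. Write $T(f)=\sum_{k=0}^{n}\binom{n+k}{2k}\binom{2k}{k}\frac{(-1)^k}{n+k}f(k)$, so that \eqref{xxx_id} says $T(1)=0$ for $n\ge 1$.

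For the first identity, use
$$\frac{k}{2k+1}=\frac12-\frac{1}{2(2k+1)}.$$
This gives
$$T\!\left(\frac{k}{2k+1}\right)=\frac12T(1)-\frac12T\!\left(\frac{1}{2k+1}\right)=-\frac{1}{(2n-1)(2n+1)}$$
by \eqref{xxx_id} and \eqref{id_mu1}. Multiplying by $-1$, to account for the sign $(-1)^{k+1}$ in the statement, gives the claimed value $\frac{1}{(2n-1)(2n+1)}$.

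For the second identity, use
$$\frac{k+1}{2k+3}=\frac12-\frac{1}{2(2k+3)}.$$
Then $T\!\left(\frac{k+1}{2k+3}\right)=-\frac12T\!\left(\frac{1}{2k+3}\right)$, and \eqref{id_mu2} turns this into $\frac{1}{(2n-3)(2n-1)(2n+1)(2n+3)}$, as stated.

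No step is a real obstacle; the only care needed is with signs and factors of $\frac12$. Both inputs are valid for all $n\ge1$: \eqref{xxx_id} holds for $n\ge1$, and \eqref{id_mu1}, \eqref{id_mu2} come from Theorem \ref{main_int_thm1} with $\mu=1/2,3/2$, which applies for every $n\ge1$. The denominator $2n-3$ never vanishes for integer $n$, so no cases need to be excluded.
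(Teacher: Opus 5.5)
Your proof is correct and follows the paper's own argument, which also just combines \eqref{id_mu1} and \eqref{id_mu2} with the vanishing sum \eqref{xxx_id}. Your decompositions $\frac{k}{2k+1}=\frac12-\frac{1}{2(2k+1)}$ and $\frac{k+1}{2k+3}=\frac12-\frac{1}{2(2k+3)}$ make that combination explicit, and you handle the signs and factors of $\frac12$ correctly.
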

\begin{proof}
Combine the first two identities in Corollary \ref{cor_int} with identity \eqref{xxx_id}.
\end{proof}

\begin{corollary}
For $n\geq 1$ we have
\begin{equation}
\sum_{k=0}^{n} \binom{n+k}{2k} \binom{2k}{k} \frac{(-1)^{k}}{(n+k)^2 (n+1+k)} = \frac{(-1)^n (n^2-2n-1)}{n^2 (2n+1)\binom{2n}{n}}.
\end{equation}
\end{corollary}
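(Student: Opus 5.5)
Write $A=\binom{n+k}{2k}\binom{2k}{k}(-1)^k$. The plan is to use the partial fraction decomposition
$$\frac{1}{(n+k)^2(n+1+k)} = \frac{1}{(n+k)^2} - \frac{1}{(n+k)(n+1+k)}.$$
This holds because $\frac{1}{(n+k)^2}-\frac{1}{(n+k)(n+k+1)} = \frac{(n+k+1)-(n+k)}{(n+k)^2(n+k+1)}$. Multiplying by $A$ and summing over $0\le k\le n$ splits the left-hand side into the sum in \eqref{id_mu3} minus the sum in \eqref{id_mu4}. Both are already evaluated in Corollary \ref{cor_int}, so the corollary reduces to an algebraic simplification.

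Substituting the two evaluations, the left-hand side becomes
$$\frac{(-1)^{n+1}}{n^2\binom{2n}{n}} - \frac{(-1)^{n+1}}{(2n+1)\binom{2n}{n}} = \frac{(-1)^{n}}{\binom{2n}{n}}\left(\frac{1}{2n+1}-\frac{1}{n^2}\right) = \frac{(-1)^n (n^2-2n-1)}{n^2(2n+1)\binom{2n}{n}},$$
since $\frac{1}{2n+1}-\frac{1}{n^2}=\frac{n^2-(2n+1)}{n^2(2n+1)}$. This is exactly the claimed right-hand side.

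There is no real obstacle: the only points to watch are the sign bookkeeping and the validity of the inputs for all $n\ge 1$. Identities \eqref{id_mu3} and \eqref{id_mu4} are stated for $n\ge1$, coming from $\mu=n$ and $\mu=n+1$ with $\Re(\mu)>0$. As a sanity check, at $n=1$ the direct sum is $\frac{1}{2}-\frac{2}{12}=\frac{1}{3}$, and the formula gives $\frac{-(1-2-1)}{1\cdot 3\cdot 2}=\frac{1}{3}$, which agrees.
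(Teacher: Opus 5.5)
Your proposal is correct and follows the paper's proof, which simply says to combine \eqref{id_mu3} and \eqref{id_mu4}. Your partial fraction $\frac{1}{(n+k)^2(n+1+k)}=\frac{1}{(n+k)^2}-\frac{1}{(n+k)(n+1+k)}$ and the subsequent simplification make that combination explicit, and the signs and algebra check out.
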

\begin{proof}
Combine the last two identities in Corollary \ref{cor_int}.
\end{proof}

\begin{theorem}\label{main_int_thm2}
Let $m\geq 0$ be an integer such that $m<n-1$. Then
\begin{equation}
\sum_{k=0}^{n} \binom{n+k}{2k} \binom{2k}{k} \frac{(-1)^k}{(n+k)(m+k+1)^2} = \frac{(-1)^m (m!)^2 (n-m-2)!}{(n+m+1)!} 
.
\end{equation}
\end{theorem}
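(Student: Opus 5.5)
Multiply \eqref{main_id_mod} by $x^m\ln x$ and integrate over $[0,1]$, using
$$\int_0^1 x^{m+k}\ln x\,dx=-\frac{1}{(m+k+1)^2}.$$
This turns the left-hand side of the theorem into
$$-\frac{(-1)^n}{2n}\int_0^1 x^m\ln x\,\bigl(P_n(2x-1)-P_{n-1}(2x-1)\bigr)\,dx .$$
So everything reduces to computing
$$J_{n,m}=\int_0^1 x^m\ln x\,P_n(2x-1)\,dx .$$
I would obtain it by differentiating the Mellin-type formula with respect to the exponent:
$$I_n(s)=\int_0^1 x^{s}P_n(2x-1)\,dx=\frac{\Gamma(s+1)^2}{\Gamma(s+n+2)\Gamma(s+1-n)} .$$
This is the formula of Theorem \ref{main_int_thm1} after the substitution $x\mapsto x^2$ and $s=\mu-1$, up to a factor $2$. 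Then $J_{n,m}=I_n'(m)$.

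The key point is that for an integer $m$ with $0\le m<n$, the factor $1/\Gamma(s+1-n)$ has a simple zero at $s=m$, because $m+1-n\le 0$. Hence $I_n(m)=0$, which is just orthogonality, and
$$I_n'(m)=\frac{(m!)^2}{(m+n+1)!}\cdot\frac{d}{ds}\frac{1}{\Gamma(s+1-n)}\Big|_{s=m}.$$
Near a pole $-j$ one has $1/\Gamma(z)\sim(-1)^j j!\,(z+j)$. Taking $j=n-1-m$ gives
$$J_{n,m}=\frac{(-1)^{n-1-m}(m!)^2(n-1-m)!}{(n+m+1)!}.$$
The condition $m<n-1$ is exactly what ensures this applies to both $P_n$ (with $m<n$) and $P_{n-1}$ (with $m<n-1$).

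Now I substitute and simplify. From the formula above,
$$J_{n-1,m}=\frac{(-1)^{n-m}(m!)^2(n-2-m)!}{(n+m)!}.$$
Therefore
$$J_{n,m}-J_{n-1,m}=(-1)^{n-1-m}(m!)^2(n-2-m)!\left[\frac{n-1-m}{(n+m+1)!}+\frac{1}{(n+m)!}\right]=(-1)^{n-1-m}(m!)^2(n-2-m)!\,\frac{2n}{(n+m+1)!}.$$
Multiplying by $-(-1)^n/(2n)$ gives
$$(-1)^m\frac{(m!)^2(n-m-2)!}{(n+m+1)!},$$
which is the claim.

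The main obstacle is justifying the differentiation step, that is, the derivative of $1/\Gamma$ at its zeros. I would handle it with the reflection formula: $1/\Gamma(z)=\Gamma(1-z)\sin(\pi z)/\pi$, whose derivative at $z=-j$ is $\Gamma(1+j)\cos(\pi j)=(-1)^j j!$.

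The differentiation under the integral sign is routine, since $x^{s}\ln x$ is integrable for $\Re s>-1$. If I want to avoid Gamma-calculus altogether, there is a fallback: prove the formula for $J_{n,m}$ directly by expanding $P_n(2x-1)=\sum_j(-1)^{n-j}\binom{n}{j}\binom{n+j}{j}x^j$ and evaluating the resulting finite sum. That route is more tedious.
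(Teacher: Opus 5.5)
Your proof is correct. Its outer structure matches the paper's: integrate \eqref{main_id_mod} against $x^m\ln x$ (the paper uses $\ln(1/x)$), evaluate the left side termwise, and reduce everything to $\int_0^1 x^m\ln x\,P_n(2x-1)\,dx$.

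You differ from the paper in how you evaluate that key integral. The paper simply quotes Gautschi's formula $\int_0^1 x^m\ln(1/x)P_n(2x-1)\,dx=(-1)^{n-m}(m!)^2(n-m-1)!/(n+m+1)!$; your $J_{n,m}$ is exactly its negative. You instead derive it by differentiating, in the exponent, the Mellin-type formula already used for Theorem \ref{main_int_thm1}. The factor $(n-m-1)!$ then appears as the derivative of $1/\Gamma$ at its zero $m+1-n$.

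The paper's route is shorter but rests on a second external result. Yours buys several things:
\begin{itemize}
\item it is self-contained relative to the paper;
\item it shows that Theorem \ref{main_int_thm2} is in effect $-2\,\partial_\mu$ of the identity in Theorem \ref{main_int_thm1}, evaluated at $\mu=m+1$;
\item it explains the hypothesis $m<n-1$: for $m=n-1$ one has $I_{n-1}(m)\neq 0$, and harmonic-number terms would enter.
\end{itemize}

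You can also dispense with the reflection formula. The expression $\Gamma(s+1)^2/(\Gamma(s+n+2)\Gamma(s+1-n))$ equals $s(s-1)\cdots(s-n+1)/\bigl((s+1)(s+2)\cdots(s+n+1)\bigr)$. This is a rational function with a simple zero at $s=m$, so $I_n'(m)$ can be read off directly.

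Your sign bookkeeping is also right. The factor $(-1)^m/(2n)$ in the paper's displayed step should read $(-1)^n/(2n)$.
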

\begin{proof}
Work with \eqref{main_id_mod} again, multiply both sides by $x^m \ln(1/x)$ and integrate from 0 to 1. This yields
\begin{align*}
&\sum_{k=0}^{n} \binom{n+k}{2k} \binom{2k}{k} \frac{(-1)^k}{(n+k)} \int_0^1 x^{k+m}\ln(1/x) dx \\
&\qquad\qquad = \frac{(-1)^m}{2n} \left ( \int_0^1 x^m\ln(1/x) (P_n(2x-1)-P_{n-1}(2x-1)) dx \right ).
\end{align*}
The left-hand side is quickly calculated using the well-known
$$\int_0^1 x^{p}\ln(1/x) dx = \frac{1}{(p+1)^2}$$
for any nonegative integer $p$. \\
To evaluate the integrals on the right, we make use of Gautschi's formula \cite{Gautschi} which states that for $n>m$
$$\int_0^1 x^m \ln(1/x) P_n(2x-1) dx = (-1)^{n-m} (m!)^2 \frac{(n-m-1)!}{(n+m+1)!}.$$
\end{proof}

\begin{corollary}
We have
\begin{equation}
\sum_{k=0}^{n} \binom{n+k}{2k} \binom{2k}{k} \frac{(-1)^k}{(n+k)(k+1)^2} = 
\begin{cases}
3/4, & n=1 \\ 
\frac{1}{(n-1)n(n+1)}, & n\geq 2,
\end{cases} 
\end{equation}
and
\begin{equation}
\sum_{k=0}^{n} \binom{n+k}{2k} \binom{2k}{k} \frac{(-1)^{k+1}}{(n+k)(k+2)^2} = 
\begin{cases}
-5/36, & n=1 \\ 
1/288, & n=2 \\
\frac{1}{(n-2)(n-1)n(n+1)(n+2)}, & n\geq 3.
\end{cases} 
\end{equation}
\end{corollary}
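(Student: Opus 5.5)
The plan is to read both identities as the special cases $m=0$ and $m=1$ of Theorem \ref{main_int_thm2}, and to handle separately, by direct computation, the small values of $n$ that the hypothesis $m<n-1$ excludes. The restriction is genuine: the proof of Theorem \ref{main_int_thm2} applies Gautschi's formula to both $P_n(2x-1)$ and $P_{n-1}(2x-1)$, so it needs $n-1>m$. This is why the corollary has exceptional initial cases.

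For the first identity, take $m=0$, which is allowed for $n\geq 2$. The left side of Theorem \ref{main_int_thm2} is then exactly our sum, since $(m+k+1)^2=(k+1)^2$. The right side becomes
\[
\frac{(-1)^0 (0!)^2 (n-2)!}{(n+1)!}=\frac{1}{(n-1)n(n+1)}.
\]
For the second identity, take $m=1$, which is allowed for $n\geq 3$. The right side becomes
\[
\frac{-(n-3)!}{(n+2)!}=-\frac{1}{(n-2)(n-1)n(n+1)(n+2)}.
\]
Our sum carries $(-1)^{k+1}$ instead of $(-1)^k$. Multiplying by $-1$ then gives the stated value $1/((n-2)(n-1)n(n+1)(n+2))$.

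The remaining cases are finite sums, which I will evaluate term by term.

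For $n=1$ in the first identity, the $k=0$ term is $1$ and the $k=1$ term is $-\binom{2}{2}\binom{2}{1}/(2\cdot 4)=-1/4$. The total is $3/4$.

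For $n=1$ in the second identity, the terms are $-1/4$ and $+2/(2\cdot 9)=1/9$. The total is $-5/36$.

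For $n=2$ in the second identity, there are three terms:
\begin{itemize}
\item $k=0$: $-1/(2\cdot 4)=-1/8$;
\item $k=1$: $+3\cdot 2/(3\cdot 9)=2/9$;
\item $k=2$: $-1\cdot 6/(4\cdot 16)=-3/32$.
\end{itemize}
Over the common denominator $288$ these are $-36+64-27=1$, so the total is $1/288$.

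The only real point of care is the sign bookkeeping in the $m=1$ case, together with checking that the excluded values $n\le m+1$ are exactly the listed exceptional cases. Everything else is a direct substitution into Theorem \ref{main_int_thm2}.
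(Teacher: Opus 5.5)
Your proposal is correct and follows the paper's proof, which simply sets $m=0$ and $m=1$ in Theorem \ref{main_int_thm2}. Your sign bookkeeping for $m=1$ and your term-by-term evaluations of the excluded cases ($n=1$ in both identities, $n=2$ in the second) all check out, and they spell out what the paper leaves to the reader.
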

\begin{proof}
Set $m=0$ and $m=1$ in Theorem \ref{main_int_thm2}, respectively.
\end{proof}

\begin{corollary}
For $n\geq 2$ we have
\begin{equation}
\sum_{k=0}^{n} \binom{n+k}{2k} \binom{2k}{k} \frac{(-1)^k}{(n+k)(n-1+k)^2} = (-1)^n \frac{2}{(n-1)^2 n \binom{2n}{n}}.
\end{equation}
\end{corollary}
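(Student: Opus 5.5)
The identity is the special case $m=n-2$ of Theorem \ref{main_int_thm2}, so the plan is to check that this choice is admissible and then rewrite the right-hand side.

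\emph{Admissibility.} For $n\ge 2$ the integer $m=n-2$ satisfies $m\ge 0$, and trivially $m=n-2<n-1$. So the hypotheses of Theorem \ref{main_int_thm2} hold. The factor $(m+k+1)^2$ in its denominator becomes $(n-1+k)^2$, so its left-hand side is exactly the sum in the corollary.

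\emph{Right-hand side.} With $m=n-2$ we have $(-1)^m=(-1)^n$, $n-m-2=0$ and $n+m+1=2n-1$. Theorem \ref{main_int_thm2} therefore gives
\[
\sum_{k=0}^{n} \binom{n+k}{2k} \binom{2k}{k} \frac{(-1)^k}{(n+k)(n-1+k)^2} = (-1)^n\frac{((n-2)!)^2}{(2n-1)!}.
\]

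\emph{Matching the stated form.} Since $n!^2=n^2(n-1)^2((n-2)!)^2$ and $(2n)!=2n\,(2n-1)!$, we get
\[
\frac{2}{(n-1)^2\, n\binom{2n}{n}} = \frac{2\, n!^2}{(n-1)^2\, n\,(2n)!} = \frac{2n\,((n-2)!)^2}{2n\,(2n-1)!} = \frac{((n-2)!)^2}{(2n-1)!}.
\]
This agrees with the previous display, including the sign $(-1)^n$, which proves the corollary.

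There is no real obstacle here. The only points needing care are that the boundary choice $m=n-2$ really is allowed by the strict inequality $m<n-1$, and the factorial bookkeeping with $0!=1$.
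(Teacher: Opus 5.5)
Your proof is correct and follows the same route as the paper: you specialize Theorem \ref{main_int_thm2} to $m=n-2$ and use the identity $\frac{((n-2)!)^2}{(2n-1)!}=\frac{2}{(n-1)^2 n\binom{2n}{n}}$. You also check the admissibility of $m=n-2$ and verify that factorial identity explicitly, whereas the paper only quotes it.
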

\begin{proof}
Set $m=n-2$ in Theorem \ref{main_int_thm2} and use 
$$\frac{((n-2)!)^2}{(2n-1)!} = \frac{2}{(n-1)^2 n \binom{2n}{n}}.$$
\end{proof}

We conclude with the following theorem dealing with the square of central binomial coefficients.

\begin{theorem}\label{arcsin_thm}
For $n\geq 1$ we have
\begin{equation}
\sum_{k=0}^{2n} \binom{2n+k}{2k} \binom{2k}{k}^2 2^{-2k} (2k+1) \frac{(-1)^k}{(2n+k)(k+1)^2} = \frac{2^{-4n}}{n^3} \binom{2(n-1)}{n-1}^2
\end{equation}
and
\begin{equation}
\sum_{k=0}^{2n+1} \binom{2n+1+k}{2k} \binom{2k}{k}^2 2^{-2k} (2k+1) \frac{(-1)^k}{(2n+1+k)(k+1)^2} 
= \frac{2^{-(4n+3)}}{(2n+1)(n+1)^2} \binom{2n}{n}^2.
\end{equation}
\end{theorem}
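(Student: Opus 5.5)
The plan is to repeat the integration strategy of Theorems \ref{main_int_thm1} and \ref{main_int_thm2}. We start from \eqref{main_id_mod}, multiply both sides by a weight $w(x)$ on $[0,1]$ whose moments produce the extra factor in the summand, and integrate from $0$ to $1$. Write $c_j=\binom{2j}{j}2^{-2j}$. Since $c_{k+1}=c_k\frac{2k+1}{2k+2}$, we have
$$\binom{2k}{k}^2 2^{-2k}\frac{2k+1}{(k+1)^2}=\binom{2k}{k}\cdot\frac{2\,c_{k+1}}{k+1}.$$
So we want $\int_0^1 x^k w(x)\,dx = 2c_{k+1}/(k+1)$, and we will then use \eqref{main_id_mod} with $x$ replaced by the variable of integration.

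To build $w$, use the arcsine moments $c_j=\frac1\pi\int_0^1 t^j\,\frac{dt}{\sqrt{t(1-t)}}$ together with Fubini. Set
$$w(x)=\frac{2}{\pi}\int_x^1\frac{dt}{\sqrt{t(1-t)}}=\frac{2}{\pi}\bigl(\pi-2\arcsin\sqrt{x}\bigr).$$
Then
$$\int_0^1 x^k w(x)\,dx=\frac2\pi\int_0^1 \frac{t^{k+1}}{(k+1)\sqrt{t(1-t)}}\,dt=\frac{2c_{k+1}}{k+1},$$
and the left-hand side of the theorem (with $n$ replaced by $N=2n$ or $N=2n+1$) equals
$$\frac{(-1)^N}{2N}\int_0^1 w(x)\bigl(P_N(2x-1)-P_{N-1}(2x-1)\bigr)\,dx.$$

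To evaluate the right side, apply Fubini again:
$$\int_0^1 w(x)P_m(2x-1)\,dx=\frac2\pi\int_0^1\frac{1}{\sqrt{t(1-t)}}\int_0^t P_m(2x-1)\,dx\,dt.$$
Differentiating the generating function \eqref{intro_b} and combining with \eqref{intro_c} gives the classical $(2m+1)P_m=P_{m+1}'-P_{m-1}'$. Using $P_j(-1)=(-1)^j$, this yields
$$\int_0^t P_m(2x-1)\,dx=\frac{P_{m+1}(2t-1)-P_{m-1}(2t-1)}{2(2m+1)}$$
for $m\ge1$, while $m=0$ gives simply $t$. Substituting $t=(1+\cos\theta)/2$ makes $dt/\sqrt{t(1-t)}=d\theta$, so everything reduces to $\int_0^\pi P_j(\cos\theta)\,d\theta$. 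This is the classical value $\pi P_j(0)^2$: it equals $\pi\binom{j}{j/2}^2 2^{-2j}$ for even $j$ and $0$ for odd $j$. It can be derived from \eqref{intro_b}, since $\int_0^\pi (1-2z\cos\theta+z^2)^{-1/2}d\theta$ is $\pi\sum_j P_j(0)^2z^j$ by the product formula, or it can be quoted from Gradshteyn--Ryzhik.

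Collecting terms, the right side involves $P_j(0)^2$ for $j\in\{N+1,N-1\}$ (from $P_N$) and $j\in\{N,N-2\}$ (from $P_{N-1}$). By parity, only two of these four survive.
\begin{itemize}
\item For $N=2n$, only the terms from $P_{2n-1}$ remain, involving $P_{2n}(0)^2$ and $P_{2n-2}(0)^2$.
\item For $N=2n+1$, only the terms from $P_{2n+1}$ remain, involving $P_{2n+2}(0)^2$ and $P_{2n}(0)^2$.
\end{itemize}
Express everything through $\binom{2(n-1)}{n-1}$, respectively $\binom{2n}{n}$, using $\binom{2n}{n}=\frac{2(2n-1)}{n}\binom{2n-2}{n-1}$, and simplify. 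The difference of squares then factors and should collapse to the stated closed forms.

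I expect the main obstacle to be this last bookkeeping step: the boundary/sign conventions in the antiderivative formula, the small case $N-1=0$, and the final simplification to exactly $\frac{2^{-4n}}{n^3}\binom{2n-2}{n-1}^2$. Justifying $\int_0^\pi P_j(\cos\theta)\,d\theta=\pi P_j(0)^2$ is the only nontrivial external input, and I would cite it rather than reprove it.
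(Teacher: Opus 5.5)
Your proof is correct, and the step you were worried about closes cleanly. From $P_{2n}(0)^2=\frac{(2n-1)^2}{4n^2}P_{2n-2}(0)^2$ you get $P_{2n-2}(0)^2-P_{2n}(0)^2=\frac{4n-1}{4n^2}P_{2n-2}(0)^2$, and likewise $P_{2n}(0)^2-P_{2n+2}(0)^2=\frac{4n+3}{4(n+1)^2}P_{2n}(0)^2$. These cancel the factors $2m+1=4n-1$ and $4n+3$ and give exactly the two stated right-hand sides. Also, for $n\ge 1$ every $m$ to which you apply the antiderivative formula satisfies $m\ge 1$, so the case $N-1=0$ never occurs.

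Compared with the paper, the underlying idea is the same: under $y=2x-1$ your weight is $w=1-\frac{2}{\pi}\arcsin y$, i.e.\ the paper's $\arcsin$ weight shifted by a constant. The execution, however, differs.

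The paper integrates \eqref{main_id_mod} (after $x\mapsto(1+x)/2$) against $\arcsin x$ on $[-1,1]$. It evaluates $\int_{-1}^1(1+x)^k\arcsin x\,dx$ through a binomial expansion, the odd moments of $\arcsin$ (by parts and Wallis' integral), and a central-binomial identity of Bataille, obtaining \eqref{arcsin_int}. The resulting extra term $\frac{2^k}{k+1}$ must then be removed with \eqref{id_int1}. The Legendre side is handled by quoting Gradshteyn--Ryzhik 7.249(1).

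By building the constant into $w$ and using Fubini against the arcsine density, you get the moments $2c_{k+1}/(k+1)$ in one line, with no binomial sums and no appeal to \eqref{id_int1}. Your treatment of the right side, via $(2m+1)P_m=P_{m+1}'-P_{m-1}'$ and $\int_0^\pi P_j(\cos\theta)\,d\theta=\pi P_j(0)^2$, in effect reproves the tabulated integral in the form $\int_{-1}^1 P_m(y)\arcsin y\,dy=\pi\,\frac{P_{m-1}(0)^2-P_{m+1}(0)^2}{2m+1}$ for $m\ge 1$.

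Your route is therefore more self-contained and uniform in $n$, with no separate check of $n=1$. The paper's route, in exchange, produces the explicit integral \eqref{arcsin_int} as a by-product.
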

\begin{proof}
The two results are readily checked when $n=1$, so we suppose that $n\ge 2$.\\  
Working with \eqref{main_id_mod} again we see that
$$\sum_{k=0}^n \binom{n+k}{2k} \binom{2k}{k} \frac{(-1)^k}{n+k} 2^{-k} (1+x)^k = \frac{(-1)^n}{2n} \left ( P_n(x) - P_{n-1}(x)\right ),$$
which after multiplying both sides with $\arcsin(x)$ and integrating from $-1$ to 1 becomes
$$\sum_{k=0}^n \binom{n+k}{2k} \binom{2k}{k} \frac{(-1)^k}{n+k} 2^{-k} \int_{-1}^1 (1+x)^k\arcsin(x)\,dx 
= \frac{(-1)^n}{2n} \left ( I_n - I_{n-1}\right ),$$
with
$$I_n = \int_{-1}^1 P_n(x) \arcsin(x)\,dx.$$
We begin by evaluating the integral on the left-hand side. As $\arcsin(x)$ is an odd function it is readily seen that
\begin{equation*}
\int_{-1}^1 x^k\arcsin(x)\,dx = \begin{cases}
0, & k \,\,\text{even} \\ 
 &  \\
2 \int_{0}^1 x^k\arcsin(x)\,dx, & k \,\,\text{odd}
\end{cases} 
\end{equation*}
and hence
$$\int_{-1}^1 (1+x)^k\arcsin(x)\,dx = 2 \sum_{j=1}^{\lfloor (k+1)/2 \rfloor } \binom{k}{2j-1} \int_{0}^1 x^{2j-1} \arcsin(x)\,dx.$$
The next step is to use the integral
$$\int_{0}^1 x^{2n-1} \arcsin(x)\,dx = \frac{\pi}{4n}\left ( 1 - \frac{1}{2^{2n}}\binom{2n}{n} \right ),$$
which follows from
\begin{align*}
\int_0^1 x^{2n-1} \arcsin(x)\,dx &=\int_0^{\pi/2}(\sin u)^{2n-1}u\cos u\,du\\
&=\left[u\cdot\frac{(\sin u)^{2n}}{2n}\right]_0^{\pi/2}-\frac{1}{2n}\int_0^{\pi/2}(\sin u)^{2n}\,du\\
&=\frac{\pi}{2}\cdot\frac{1}{2n}-\frac{1}{2n}\cdot\frac{\pi}{2}\cdot \frac{1}{2^{2n}}\binom{2n}{n}.
\end{align*}
This yields
\begin{equation*}
\int_{-1}^1 (1+x)^k\arcsin(x)\,dx = \frac{\pi}{2} \sum_{j=1}^{\lfloor (k+1)/2 \rfloor} \binom{k}{2j-1} \frac{1}{j} \left (1-2^{-2j}\binom{2j}{j}\right ).
\end{equation*}
The two sums involved can be evaluated in closed form. First, by integration from $0$ to $1$, the identity 
$$\sum\limits_{j=1}^{\lfloor (k+1)/2 \rfloor}\binom{k}{2j-1} x^{2j-1}=\frac{1}{2}((1+x)^k-(1-x)^k)$$ 
provides
$$\sum_{j=1}^{\lfloor (k+1)/2 \rfloor} \binom{k}{2j-1} \frac{1}{j} = \frac{2^{k+1}-2}{k+1}$$
Second, the known relation (see Bataille \cite{Bat})
\[\sum_{j=0}^{\lfloor{m/2}\rfloor}\binom{m}{j}\binom{m-j}{j}2^{m-2j}=\binom{2m}{m}\quad (m\ge 0),\]
leads to
\begin{align*}
\sum_{j=1}^{\lfloor (k+1)/2 \rfloor} \binom{k}{2j-1} \frac{1}{j} 2^{-2j} \binom{2j}{j} 
&= \frac{2}{k+1}\sum_{j=1}^{\lfloor (k+1)/2 \rfloor} \binom{k+1}{2j}\binom{2j}{j}2^{-2j} \\
&= \frac{2}{k+1}\sum_{j=1}^{\lfloor (k+1)/2 \rfloor} \binom{k+1}{j}\binom{k+1-j}{j}2^{-2j} \\
&= \frac{2}{k+1}\left(2^{-(k+1)}\binom{2k+2}{k+1}-1\right) \\
&= - \frac{2}{k+1} + \frac{2^{-k+1}}{(k+1)^2} (2k+1) \binom{2k}{k}.
\end{align*}
Finally
\begin{equation}\label{arcsin_int}
\int_{-1}^1 (1+x)^k\arcsin(x)\,dx = \pi \left ( \frac{2^k}{k+1} - \frac{2^{-k}}{(k+1)^2} (2k+1) \binom{2k}{k} \right ).
\end{equation}
Using \eqref{arcsin_int} the sum on the left hand side becomes
\begin{align*}
& \sum_{k=0}^n \binom{n+k}{2k} \binom{2k}{k} \frac{(-1)^k}{n+k} 2^{-k} \int_{-1}^1 (1+x)^k\arcsin(x)\,dx \\
&\qquad = \pi \sum_{k=0}^{n} \binom{n+k}{2k} \binom{2k}{k}^2 2^{-2k} (2k+1) \frac{(-1)^{k+1}}{(n+k)(k+1)^2},
\end{align*}
where we have also applied \eqref{id_int1}. The two results in Theorem \ref{arcsin_thm} now follow from the integral
\begin{equation*}
I_n = \int_{-1}^1 P_n(x) \arcsin(x)\,dx = \begin{cases}
0, & n \,\,\text{even} \\ 
 &  \\
\pi \left (\frac{(n-2)!!}{2^{(n+1)/2}\big (\frac{n+1}{2}\big )!}\right )^2, & n \,\,\text{odd};
\end{cases} 
\end{equation*}
which is Eq. 7.249 (1) in Gradshteyn and Ryzhik \cite{GrRy07}.
\end{proof}



\begin{thebibliography}{99}

\bibitem{Aloui}
B. Aloui, Legendre polynomials in terms of integrals involving Hermite polynomials, Period. Math. Hung. 91 (2025), 112--123.

\bibitem{Bat} 
M. Bataille, Solution to 1996 Ukrainian Mathematical Olympiad Problem 8, Crux Math. 27 (7) (2001), 428--429.

\bibitem{Chu}
W. Chu and J. M. Campbell, Expansions over Legendre polynomials and infinite double series identities, Ramanujan J. 60 (2023), 317--353.

\bibitem{Diekemaa}
E. Diekemaa and T. H. Koornwinder, Generalizations of an integral for Legendre polynomials by Persson and Strang,
J. Math. Anal. Appl. 388 (2012), 125--135.

\bibitem{Gautschi}
W. Gautschi, On the preceding paper "A Legendre Polynomial Integral" by James L. Blue, Math. Comp. 146 (1979), 742--743.

\bibitem{GrRy07}
I.~Gradshteyn and I.~Ryzhik, Table of Integrals, Series, and Products, Elsevier Academic Press, 2007.

\bibitem{Guo}
V. J. W. Guo, Some congruences involving powers of Legendre polynomials, Integral Transforms Spec. Funct. 26 (2015), 660--666.

\bibitem{Gould}
H. W. Gould, Combinatorial Identities, Published by the author, Revised edition, 1972.

\bibitem{Klemm}
A. D. Klemm and S. Y. Larsen, Some integrals involving Legendre polynomials providing combinatorial identities, 
J. Austral. Math. Soc. Ser. B 32 (1991), 304--310.

\bibitem{OEIS}
N. J. A. Sloane, \emph{The On-Line Encyclopedia of Integer Sequences}, https://oeis.org.

\bibitem{Srivastava}
H. M. Srivastava and J. Choi, \emph{Series Associated with the Zeta and Related Functions}, Springer Science+Media, B.V., 2001.

\bibitem{Wan}
J. Wan and W. Zudilin, Generating functions of Legendre polynomials: a tribute to Fred Brafman, J. Approx. Theory 164 (2012), 488--503.

\end{thebibliography}
\end{document}